\newcommand{\fn}{\ensuremath{\mathcal{F}_{n}}}
\newcommand{\suff}{\ensuremath{\mathcal{K}_{m}}}
\newcommand{\no}[1]{\ensuremath{\epsilon(#1)}}
\newcommand{\yes}[1]{\ensuremath{\delta(#1)}}
\newcommand{\gs}[3]{\ensuremath{#1^{#2}_{#3}}}
\newcommand{\denom}[1]{\ensuremath{d_{#1}(x)}}
\newcommand{\cd}{\ensuremath{\cdot}}
\newtheorem{theorem}{Theorem}
\newtheorem{corollary}[theorem]{Corollary}
\newtheorem{lemma}[theorem]{Lemma}
\DeclareMathOperator{\sk}{sk}
\begin{document}

\title{
Generalized Fibonacci recurrences and the 
lex-least De Bruijn sequence}
\author{Joshua Cooper \and Christine E. Heitsch}
\date{\today}
\maketitle

\begin{abstract}
The skew of a binary string is the difference between 
the number of zeroes and the number of ones, while the length of the string
is the sum of these two numbers.
We consider certain suffixes of the lexicographically-least de 
Bruijn sequence at natural breakpoints of the binary string.
We show that the skew and length of these suffixes are enumerated by 
sequences generalizing the Fibonacci and Lucas numbers, respectively.
\end{abstract}

\section{Introduction}\label{intro}

Let $w = a_{1} a_{2} \ldots a_{l}$ be a word over the alphabet $\{0,1\}$
of length $|w| = l$.
When $|w| = 2^{n}$ and 
the indices of $w$ are interpreted cyclically, the word is said to be a 
\textit{binary de Bruijn sequence of order $n$} if it contains 
each of the $2^{n}$ distinct binary strings of length $n$ as a subword. 
The string $00010111$ is a binary de Bruijn sequences of order $3$.

A \textit{binary necklace} is an equivalence class of binary words 
under rotation.
The representative element for the equivalence class is chosen to be 
the lexicographically least one.
A binary string is a \textit{Lyndon word} if it is an aperiodic necklace
representative.
The binary Lyndon words of length $\leq 4$ are
$0$, $1$, $01$, $001$, $011$, $0001$, $0011$, $0111$.  

De Bruijn sequences and Lyndon words are related via the ``Ford sequence,''
denoted here \fn, which is the lexicographically least binary de Bruijn 
sequence of order $n$.
Fredricksen proved~\cite{fredricksen-82} that \fn\ is obtained 
by concatenating all Lyndon words of lengths dividing $n$ in
lexicographic order. 
For instance, $\mathcal{F}_{4} = 0000100110101111$.
We note that this result generalizes to constructing the 
lexicographically-least de Bruijn sequence over an arbitrary 
alphabet~\cite{fredricksen-maiorana-78, moreno-04}.

The Ford sequence is also generated by applying a greedy strategy to 
the production of a binary de Bruijn sequence. 
The algorithm constructs \fn\ one bit at a time, preferring $0$'s to 
$1$'s whenever possible.
Given this, it is reasonable to expect that initial segments of \fn\ 
contain many more zeros than ones. 
In fact, previous work~\cite{ford1} shows that the maximum difference (called
the discrepancy) of \fn\ is $\Theta(2^n \log n/n)$.

The discrepancy is the maximum possible ``skew'' over all prefixes  of \fn.
The {\it skew} of a binary string $w$ of length $l$, 
denoted $\sk(w)$, is the difference 
between the number of zeros and the number of ones.
Since the length of $w$ is the sum of these two numbers, we have that
\[ \sk(w) = \sum_{i = 1}^{l} (-1)^{a_{i}} 
\mbox{ and }
|w| = \sum_{i=1}^{l} (1)^{a_{i}}. \]

Figure~\ref{fslabels} illustrates the discrepancy for $n = 4,5,6,7$ 
by graphing the skew of all prefixes of \fn.
As illustrated on the graphs,
there are natural breakpoints in \fn\
following the occurrence of the subword $0^i 1^{n-i}$ for $1 \leq i \leq n -1$.
The cases where $i = 0$ and $i = n$ are the final $1$ and initial $0$,
respectively.

\begin{figure}
\centering
\includegraphics[width = .45\textwidth]{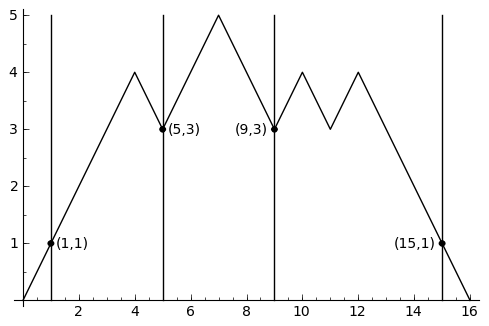}
\includegraphics[width = .45\textwidth]{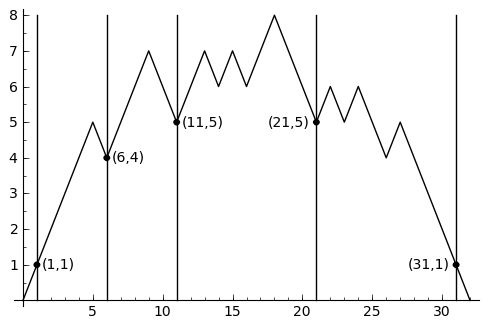} \\
\includegraphics[width = .45\textwidth]{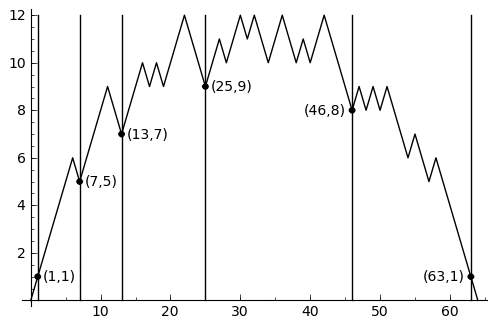}
\includegraphics[width = .45\textwidth]{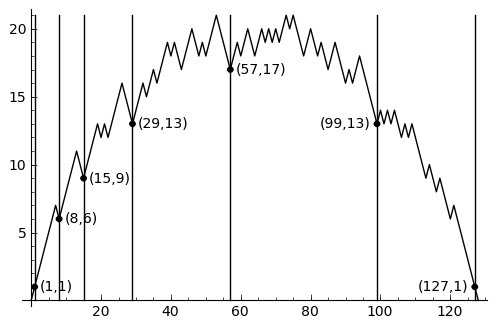}
\caption{\label{fslabels} The discrepancy of the Ford sequence \fn\ for 
$n = 4,5,6,7$ with particular ``breakpoint'' values of the prefix skew 
identified.}
\end{figure}

In this article, 
we prove that the skew at these breakpoints gives sequences of values 
which are Fibonacci-like.
Our results are given in terms of suffixes of the Ford sequence, 
which are directly related to prefixes by $\sk(\fn) = 0$ and $|\fn| = 2^n$.
As stated precisely in Theorem~\ref{main} below, 
we show that the skew and the length of these breakpoint suffixes of \fn\ 
are enumerated by sequences generalizing the 
Fibonacci and Lucas numbers, respectively.

\section{Preliminaries and statement of main result}\label{prelim}

Let $\ell_{0} = 1$ and,
for $1 \leq i \leq n - 1$, let $\ell_{i}$ be the subword of \fn\ 
which begins immediately after the Lyndon word
$0^{i+1}1^{n-i-1}$ and ends with the string $0^{i} 1^{n-i}$.
Hence, $\ell_{i}$ consists of 
all Lyndon words of length $d \mid n$, 
in lexicographic order,
which contain the substring $0^i$ but not $0^{i+1}$.
For technical reasons,
if $i > n - 1$, then we define $\ell_{i} = \varepsilon$, the empty string.

Let $\mathcal{L}_{m}$ be the concatenation of 
$\ell_{m} \ell_{m-1} \ldots \ell_{1}$ for a given \fn.
Hence, $\mathcal{L}_{m}$ is the substring of the $n$-th Ford sequence which
contains the Lyndon words of length $d > 1$ with at least one $0$ and 
at most $m$ consecutive $0$'s. 
Let \suff\ be the proper suffix of \fn\ consisting of the 
Lyndon words of length $d \mid n$ containing at most $m$ consecutive $0$'s;
\[ \suff = \mathcal{L}_{m} 1 = \ell_{m} \ell_{m-1} \ldots \ell_{1} \ell_{0} \mbox{.} \]
If $m \geq n - 1$, then $\suff$ is $\mathcal{F}_{n}$ except for the initial
$0$.
Also, $\mathcal{K}_0 = \ell_0 = 1$.

\begin{table}
\centering
\begin{tabular}{c|*{10}{c}}
& \multicolumn{10}{c}{$\sk(\suff)$} \\
\fn & 0 & 1 & 2 & 3 & 4 & 5 & 6 & 7 & 8 & 9 \\ \hline
1 & -1 & \cd & \cd & \cd & \cd & \cd & \cd & \cd & \cd & \cd \\ 
2 & -1 & -1 & \cd & \cd & \cd & \cd & \cd & \cd & \cd & \cd \\ 
3 & -1 & -2 & -1 & \cd & \cd & \cd & \cd & \cd & \cd & \cd \\ 
4 & -1 & -3 & -3 & -1 & \cd & \cd & \cd & \cd & \cd & \cd \\ 
5 & -1 & -5 & -5 & -4 & -1 & \cd & \cd & \cd & \cd & \cd \\ 
6 & -1 & -8 & -9 & -7 & -5 & -1 & \cd & \cd & \cd & \cd \\ 
7 & -1 & -13 & -17 & -13 & -9 & -6 & -1 & \cd & \cd & \cd \\ 
8 & -1 & -21 & -31 & -25 & -17 & -11 & -7 & -1 & \cd & \cd \\ 
9 & -1 & -34 & -57 & -49 & -33 & -21 & -13 & -8 & -1 & \cd \\ 
10 & -1 & -55 & -105 & -94 & -65 & -41 & -25 & -15 & -9 & -1 \\
\end{tabular}
\caption{Values of $\sk(\suff)$ when $0 \leq m \leq 9$ for each \fn\ with 
$1 \leq n \leq 10$.} 
\end{table}

\begin{table}
\centering
\begin{tabular}{c|*{10}{c}}
& \multicolumn{10}{c}{$|\suff|$} \\
\fn & 0 & 1 & 2 & 3 & 4 & 5 & 6 & 7 & 8 & 9 \\ \hline
1 & 1 & \cd & \cd & \cd & \cd & \cd & \cd & \cd & \cd & \cd \\ 
2 & 1 & 3 & \cd & \cd & \cd & \cd & \cd & \cd & \cd & \cd \\ 
3 & 1 & 4 & 7 & \cd & \cd & \cd & \cd & \cd & \cd & \cd \\ 
4 & 1 & 7 & 11 & 15 & \cd & \cd & \cd & \cd & \cd & \cd \\ 
5 & 1 & 11 & 21 & 26 & 31 & \cd & \cd & \cd & \cd & \cd \\ 
6 & 1 & 18 & 39 & 51 & 57 & 63 & \cd & \cd & \cd & \cd \\ 
7 & 1 & 29 & 71 & 99 & 113 & 120 & 127 & \cd & \cd & \cd \\ 
8 & 1 & 47 & 131 & 191 & 223 & 239 & 247 & 255 & \cd & \cd \\ 
9 & 1 & 76 & 241 & 367 & 439 & 475 & 493 & 502 & 511 & \cd \\ 
10 & 1 & 123 & 443 & 708 & 863 & 943 & 983 & 1003 & 1013 & 1023 \\
\end{tabular}
\caption{Values of $|\suff|$ when $0 \leq m \leq 9$ for each \fn\ with 
$1 \leq n \leq 10$.} 
\end{table}

The \textit{Fibonacci numbers} are defined by the recurrence 
$F_{n} = F_{n-1} + F_{n-2}$ with initial conditions $F_{0} = 0$ and $F_{1} = 1$.
The \textit{Lucas numbers} are defined by the recurrence 
$L_{n} = L_{n-1} + L_{n-2}$ with initial conditions $L_{0} = 2$ and $L_{1} = 1$.
The (ordinary) generating functions for these sequences are 
$x / (1 - x - x^{2})$ and $(2-x) / (1 - x - x^{2})$, respectively.
For a detailed treatment of generating functions for recurrence relations,
we refer the reader to~\cite{wilf-94}. 

Let 
\[ \denom{m} = 1 - x - x^{2} -\ldots - x^{m} = 1 - x \sum_{i = 0}^{m-1} x^{i}.\]

Let $\gs{G}{m}{n}$ be the integer sequence defined by the $m$-th order recurrence
$\gs{G}{m}{n} = \sum_{i = 1}^{m} \gs{G}{m}{n-i}$ with initial conditions
$\gs{G}{m}{0} = \gs{G}{m}{1} = \ldots = \gs{G}{m}{m-1} = 1$.
This is a generalization of the Fibonacci numbers, 
and when $m = 2$ we recover $\gs{G}{2}{n} = F_{n+1}$.
It is straightforward to see that
the sequence $\gs{G}{m}{n}$ has the generating function
\[ \frac{1 - \sum_{i = 2}^{m-1} (i-1) x^{i}}{\denom{m}}.\]
There are many possible generalizations of the Fibonacci numbers, 
depending on how the initial conditions $F_{0} = 0$ and $F_{1} = 1$ 
(and, in this case, $F_{2} = 1$) are extended.  

There are likewise different generalizations of the Lucas numbers.
Let $\gs{H}{m}{n}$ be the sequence defined by the $m$-th order recurrence
$\gs{H}{m}{n} = \sum_{i = 1}^{m} \gs{H}{m}{n-i}$ with initial conditions
$\gs{H}{m}{0} = m$ and $\gs{H}{m}{i} = 2^{i} - 1$ for $1 \leq i \leq m - 1$.
So $\gs{H}{2}{n} = L_{n}$, and it is straightforward to see that
the generating function for the sequence $\gs{H}{m}{n}$ is 
\[\frac{m - \sum_{i = 1}^{m-1}(m - i) x^{i}}{\denom{m}}.\] 

In this article, we prove the following.
\begin{theorem}\label{main}
For \fn\ with $n > 0$ and $m \geq 0$, 
\[ \sk(\suff) = -\gs{G}{m + 1}{n - 1} 
\mbox{ and } |\suff| = \gs{H}{m+1}{n} .\]
\end{theorem}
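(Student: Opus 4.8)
The plan is to strip away the Lyndon/lex structure and reduce both quantities to statistics of \emph{cyclic} binary strings avoiding $0^{m+1}$, which a transfer matrix then evaluates.

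First I would use the description of \suff\ from the preliminaries: up to reordering (which affects neither length nor skew), \suff\ is the concatenation of the word $1$ together with every Lyndon word of length $d\mid n$, $d>1$, that avoids $0^{m+1}$. Since the remaining length-one Lyndon word $0$ is all-zeros cyclically, the set of Lyndon words of length dividing $n$ avoiding $0^{m+1}$ \emph{cyclically} is exactly this collection, so $|\suff| = \sum_u|u|$ and $\sk(\suff)=\sum_u\sk(u)$ over that set. The key point is that for a Lyndon word $u$ with $d>1$ the longest $0$-run is the leading one (it begins with $0$, ends with $1$), so its linear and cyclic maximal $0$-runs coincide and the forbidden-factor condition is rotation invariant. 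By the necklace decomposition — each length-$n$ string is a rotation of $u^{n/d}$ for a unique Lyndon $u$ of length $d\mid n$, giving $d$ distinct rotations — I get, writing $C(n)$ for the number of length-$n$ cyclic strings avoiding $0^{m+1}$ and $S(n)$ for the total of their skews,
\[ C(n)=\sum_u |u| = |\suff|, \qquad S(n)=\sum_u d\cdot\tfrac{n}{d}\,\sk(u)=n\,\sk(\suff). \]
Thus $|\suff|=C(n)$ and $\sk(\suff)=S(n)/n$; by cyclic symmetry the latter also equals the number of such strings beginning with $0$ minus those beginning with $1$.

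Next I would set up the transfer matrix $A(y)$ on states $0,\dots,m$ (the current trailing $0$-run length), where reading a $1$ resets to state $0$ with weight $1$ and reading a $0$ advances the run with weight $y$ and is forbidden from state $m$. A cofactor expansion gives $\det(I-xA(y))=1-\sum_{i=1}^{m+1}y^{i-1}x^i$, which equals \denom{m+1} at $y=1$. Since $C(n)=\operatorname{tr}A(1)^n$, the power-sum identity $\sum_{n\ge1}\operatorname{tr}(A^n)x^n=-x\,\frac{d}{dx}\log\det(I-xA)$ yields $\sum_{n\ge1}C(n)x^n=\big(\sum_{i=1}^{m+1}ix^i\big)/\denom{m+1}$; adjoining $C(0)=m+1$ and simplifying reproduces exactly the stated generating function of \gs{H}{m+1}{n}, so $|\suff|=\gs{H}{m+1}{n}$.

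For the skew I would split $A(1)=A_0+A_1$ into its $0$- and $1$-steps, so that $\sk(\suff)=\operatorname{tr}\big((A_0-A_1)A^{n-1}\big)$ and hence $\sum_{n\ge1}\sk(\suff)\,x^n=x\operatorname{tr}\big((A_0-A_1)(I-xA)^{-1}\big)$. Writing $A_0-A_1=2A_0-A$ and differentiating $\log\det(I-xA(y))$ in $y$ at $y=1$ to evaluate $\operatorname{tr}\big(A_0(I-xA)^{-1}\big)$, the two pieces combine into a single fraction over \denom{m+1} whose numerator collapses to $-\big(1-\sum_{i=2}^{m}(i-1)x^i\big)$. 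Therefore $\sum_{n\ge1}\sk(\suff)\,x^n = -x\big(1-\sum_{i=2}^m(i-1)x^i\big)/\denom{m+1}$, which is $-x$ times the stated generating function of \gs{G}{m+1}{n}; comparing coefficients gives $\sk(\suff)=-\gs{G}{m+1}{n-1}$.

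The hard part is the reduction in the first paragraph rather than the generating-function algebra: I must make the cycle-lemma bookkeeping airtight — the rotation invariance of the $0^{m+1}$-avoidance for Lyndon words, the correct exclusion of $0$ and inclusion of the trailing $1$, and the assertion that a rotation-invariant additive statistic summed over all length-$n$ cyclic strings equals $n$ times its sum over the corresponding Lyndon words. Once that bridge is secured, both generating functions follow by routine transfer-matrix computations, and the matching of initial conditions is automatic from equality of rational generating functions.
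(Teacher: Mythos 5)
Your argument is correct, but it takes a genuinely different route from the paper's. The paper never leaves the Ford sequence: it parses each block $\li$ into primitives $0^i1^j$, matches those primitives with ($m$-colored) compositions of $n-k$ into parts at least $2$ (handling periodic words and repeated parts by hand), derives generating functions for the counts $c(m,n,k)$, and then obtains $\no{\mathcal{L}_m}$ and $\yes{\mathcal{L}_m}$ by weighted summation. You instead use the necklace decomposition of all of $\{0,1\}^n$ to convert $|\suff|$ and $\sk(\suff)$ into the cardinality and total skew of the length-$n$ cyclic words avoiding $0^{m+1}$, and evaluate both with a transfer matrix. Your bridge is sound: the maximal $0$-run of a Lyndon word other than $0$ is its leading run (a longer interior run would yield a lexicographically smaller rotation), so linear and cyclic avoidance coincide; each such Lyndon word of length $d$ accounts for exactly $d$ of the $2^n$ strings; and $\operatorname{tr}A(1)^n$ automatically excludes $0^n$, which is precisely the one Lyndon word absent from \suff. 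The rational functions you land on, $\bigl(\sum_{i=1}^{m+1} i x^i\bigr)/\denom{m+1}$ and $\bigl(-x+\sum_{i=3}^{m+1}(i-2)x^i\bigr)/\denom{m+1}$, are literally those in the paper's final theorem, and the reconciliation with $\gs{H}{m+1}{n}$ and $-\gs{G}{m+1}{n-1}$ is the same adjustment of low-order terms. What your approach buys is the elimination of the composition-to-Lyndon-word bookkeeping, the most delicate part of the paper; what it gives up is the finer block-level data (the individual counts $c(m,n,k)$) that the paper's method produces along the way. The one point to make explicit is the convention for the regime $n\le m$: the string $0^n$ then contains no factor $0^{m+1}$ of length at most $n$, yet it must be excluded from your count; since no closed walk in your automaton reads $n$ consecutive zeros, the trace does exclude it, but you should define ``cyclic avoidance'' via the infinite periodization (equivalently, via closed walks) so that the identity $C(n)=|\suff|$ is literally true there.
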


\section{Fibonacci, Lucas, De Bruijn, and Lyndon}

We first prove Theorem~\ref{main} for the special case when $m = 1$.
That the result holds when $m = 0$ follows directly from the definitions.

We define a Lyndon word $w$ to be a \textit{primitive of order $i$} 
if $w = 0^{i} 1^{j}$ with $i + j = |w|$, $i, j \geq 1$.
If a Lyndon word is not primitive, we say it is \textit{composite}.

Let $w$ be a Lyndon word of length $d \mid n$ which occurs in $\ell_{1}$.
Then $w$ may be uniquely parsed into primitives of order $1$ as
\[ w = 0 1^{j_{1}} 0 1^{j_{2}} \ldots 0 1^{j_{k}}
\mbox{, where $j_{l} \geq 1$ and $\sum_{l = 1}^{k} (1 + j_{l}) = d$.} \]
Let $\phi$ be a mapping from primitives of order 1 into 
the integers where $\phi(0 1^j) = 1 + j$ 
and let $\Phi(n)$ be the multiset obtained by applying $\phi$ to the 
$0 1^{j}$ subwords of $\ell_{1}$ from \fn.
For instance, in $\mathcal{F}_{6}$ we have $\ell_{1} = 01010111011011111$
and $\Phi(6) = \{2, 2, 3, 4, 6\}$.

Let $c(n,k)$ be the number of integers $k \geq 2$ in the multiset $\Phi(n)$.
Since each $\ell_{1}$ from \fn\ with $n > 1$ 
contains exactly one primitive of order 1 and length $n$,
we have that $c(n, k) = 1$ when $n = k$.
Also, $c(n, k) = 0$ for $n < k$. 
As we show below,
the distribution for other $(n,k)$ is Fibonacci-like.

Recall that a \textit{composition} of an integer $n$ into 
$k$ (positive) parts is an ordered sum of integers  
\[ n = x_{1} + x_{2} + \ldots + x_{k} \mbox{ where $x_{i} \geq 1$.} \]
We denote such a composition of $n$ as an ordered $k$-tuple 
$x = (x_{1}, x_{2}, \ldots, x_{k})$.

Since primitives of order $1$ have length $k \geq 2$, in the proof below
we consider compositions
of $n - k$ with parts greater than $1$.
There are $F_{n - k - 1}$ distinct 
$x = (x_{1}, \ldots, x_{j})$, with $\sum_{i = 1}^{j} x_{i} = n - k$ and  
$x_{i} \geq 2$, a fact easily obtained by induction on $n-k$.
We show that there are an equal number of distinct $01^{k-1}$ primitives 
in the substring $\ell_{1}$ of \fn. 

\begin{theorem}\label{fibcounts}
\[ c(n, k) = F_{n-k-1} \mbox{ for } 2 \leq k \leq n -1\mbox{.}\]
\end{theorem}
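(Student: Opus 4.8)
The plan is to count occurrences of the primitive $01^{k-1}$ among the Lyndon words making up $\ell_{1}$ of \fn\ by setting up a bijection between these occurrences and the binary strings of length $n$ that begin with the block $01^{k-1}$ and contain no two cyclically adjacent $0$'s. Once this bijection is in hand, the count collapses to enumerating such strings, which is exactly the composition count $F_{n-k-1}$ recalled just above the statement. So $c(n,k) = F_{n-k-1}$ will follow.

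First I would invoke the classical correspondence between binary strings of length $n$ and pairs $(w,t)$, where $w$ is a Lyndon word of length $d \mid n$ and $0 \le t < d$: the string is the $t$-fold cyclic rotation of $w^{n/d}$, written $\mathrm{rot}_t(w^{n/d})$. This is a bijection because every length-$n$ string has a unique primitive root $u$ of length $d \mid n$, and $u$ is a cyclic rotation of a unique Lyndon word $w$; the identity $\sum_{d \mid n} d\,L(d) = 2^n$, where $L(d)$ is the number of Lyndon words of length $d$, confirms the counts match. I would then restrict this correspondence. Since $\ell_{1}$ consists of precisely the Lyndon words of length $d \mid n$ that contain a $0$ but no $00$, each such word is a concatenation of blocks $01^{a-1}$ with $a \ge 2$, and an occurrence of $01^{k-1}$ in $\ell_{1}$ is a pair $(w,t)$ in which $t$ is the starting position of a block with $a = k$. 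Under the correspondence this pair maps to $S = \mathrm{rot}_t(w^{n/d})$, a length-$n$ string beginning with the block $01^{k-1}$ and free of cyclically adjacent $0$'s, and summing the number of $a=k$ blocks over all admissible $w$ is exactly $c(n,k)$.

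The key step is to verify that this restriction is a bijection onto the set of \emph{all} length-$n$ binary strings beginning with $01^{k-1}$ and having no cyclic $00$. Injectivity is inherited from the classical bijection. For surjectivity, given such a string $S$, note that since there is no cyclic $00$ and $S$ begins with $0$, its final symbol is $1$, so the maximal blocks $01^{*}$ tile $S$ cyclically and its first block is forced to be $01^{k-1}$; passing to the primitive root then recovers a Lyndon word $w$ of length $d \mid n$ with no $00$, together with the $a=k$ block of $w$ sitting at position $0$ of $S$. Here the aperiodicity of $w$ (being a Lyndon word) is what guarantees that distinct block-starts within a single copy of $w$ yield distinct strings $S$, so that no occurrence is double-counted or lost when the periodic copies are unrolled. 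I expect this bookkeeping to be the main obstacle: one must track the $n/d$ repetitions carefully and use aperiodicity to pin down exactly when two block-starts produce the same length-$n$ string.

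Finally I would read off the block lengths. A length-$n$ string $S = 01^{k-1}01^{x_{1}-1}\cdots 01^{x_{j}-1}$ with no cyclic $00$ corresponds bijectively to the composition $(x_{1},\dots,x_{j})$ of $n-k$ into parts $x_{i} \ge 2$. By the fact recalled above there are $F_{n-k-1}$ such compositions, so combining the two bijections gives $c(n,k) = F_{n-k-1}$ for $2 \le k \le n-1$, as claimed.
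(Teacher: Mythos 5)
Your proof is correct, and it connects the same two endpoints as the paper's argument---compositions of $n-k$ into parts $\geq 2$ on one side, occurrences of the primitive $01^{k-1}$ in $\ell_{1}$ on the other---but the bridge between them is built differently. The paper works directly with the map sending a composition $x=(x_{1},\dots,x_{j})$ to the string $\omega(x)=01^{k-1}01^{x_{1}-1}\cdots 01^{x_{j}-1}$ and then analyzes its fibers by hand: it splits into cases according to whether $\omega(x)$ is aperiodic or periodic and, within each case, matches the number of compositions whose $\omega$-image lies in a given rotation class against the number of $k$'s that the corresponding Lyndon word contributes to the multiset $\Phi(n)$ (namely $m+1$ when $k$ occurs $m$ times as a part of an aperiodic $x$, and $q$ in the periodic case). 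You instead interpose the classical bijection between length-$n$ binary strings and pairs $(w,t)$ with $w$ a Lyndon word of length $d\mid n$ and $0\le t<d$, and observe that restricting to cyclically $00$-free strings whose initial maximal block is $01^{k-1}$ restricts the pairs to exactly (word of $\ell_{1}$, starting position of a length-$k$ block). This absorbs the paper's periodicity and multiplicity bookkeeping into one standard fact, which is arguably cleaner and less error-prone; the only point requiring care, which you handle, is that ``begins with the block $01^{k-1}$'' must mean the initial \emph{maximal} block (so position $k$ carries a $0$ when $k\le n-1$), since otherwise the composition cannot be read off. Both routes then finish identically with the enumeration of compositions of $n-k$ into parts $\geq 2$ by $F_{n-k-1}$.
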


\begin{proof}
Let $k \geq 2$ be fixed.
For $n = k + 1$, there cannot be a primitive of order $1$ and length $k$ 
in $\ell_1$ of $\mathcal{F}_{k+1}$, so $c(k+1, k) = 0$.

Let $n \geq k + 2$.
Consider the compositions of $n - k$ with parts greater than 1.
For each such $x$, let $\omega(x)$ be the binary string 
\[ 0 1^{k-1} 0 1^{x_{1} - 1} \ldots 0 1^{x_{j} - 1}. \]

Suppose that $\omega(x)$ is aperiodic, and 
let $\lambda(x)$ be the Lyndon word which is the representative 
element for the equivalence class under rotation of $\omega(x)$.
If $k$ does not occur in $x$,
then $\lambda(x)$ 
contributes exactly one integer $k$ to the multiset $\Phi(n)$.

Otherwise, $\lambda(x)$ contributes $m + 1$ times to the count of $c(n,k)$,
where there are $m \geq 1$ parts of $x$ which equal $k$.
In this case, there are $m$ additional compositions 
$x^{j_{1}}, \ldots, x^{j_{m}}$
of $n - k$ 
such that 
$\omega(x^{j_{1}}), \ldots, \omega(x^{j_{m}})$ all belong to the 
equivalence class of $\omega(x)$ under rotation.
Hence,
there are $m + 1$ compositions of $n - k$ which are associated with
the same Lyndon word of length $n$ from $\ell_{1}$.

Suppose now that $\omega(x)$ is periodic with period $p$.
Let $\lambda(x)$ be the Lyndon word of length $p$ such that 
$(\lambda(x))^{n/p}$ is an element of the 
rotational equivalence class of $\omega(x)$.
Then there are $q(n/p) - 1$ parts of $x$ which equal $k$ for 
some $q \geq 1$.
Hence, $\lambda(x)$ contributes $q$ integers $k$ to the 
multiset $\Phi(n)$.
Observe that if $q = 1$, then $x$ is the only composition of $n - k$
associated with $\lambda(x)$.
Otherwise, there are $q - 1$ other instances of $01^{k-1}$ in $\lambda(x)$
and $q - 1$ distinct compositions of $n - k$ which are associated 
with $\lambda(x)$.
\end{proof}

Recall that the skew of a binary string $w$ is the difference between 
the number of $0$'s in $w$ and the number of $1$'s, denoted here
$\no{w}$ and $\yes{w}$ respectively.
Hence, 
\[ \sk(w) = \no{w} - \yes{w}.\]

\begin{theorem}
For \fn\ with $n \geq 2$, 
$\no{\ell_{1}} = F_{n-1}$ and $\yes{\ell_{1}} = F_{n+1} - 1$.
\end{theorem}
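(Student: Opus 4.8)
The plan is to count zeros and ones in $\ell_{1}$ by breaking each constituent Lyndon word into its primitives of order $1$ and then feeding in the primitive census of Theorem~\ref{fibcounts}. Every Lyndon word appearing in $\ell_{1}$ factors uniquely as $01^{j_{1}}\cdots 01^{j_{k}}$, and a single primitive $01^{k-1}$ of length $k$ contributes exactly one $0$ and exactly $k-1$ ones. Summing over all primitives in $\ell_{1}$, and recalling that $c(n,k)$ is the multiplicity of the value $k$ in $\Phi(n)$ --- equivalently the number of primitives of length $k$ occurring there --- I obtain
\[ \no{\ell_{1}} = \sum_{k=2}^{n} c(n,k) \quad\text{and}\quad \yes{\ell_{1}} = \sum_{k=2}^{n} (k-1)\,c(n,k), \]
which reduces both quantities to weighted sums of the census.

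Next I would substitute the explicit values. Theorem~\ref{fibcounts} supplies $c(n,k) = F_{n-k-1}$ for $2 \le k \le n-1$, while the lone full-length primitive $01^{n-1}$ gives the boundary term $c(n,n) = 1$, which must be handled separately since it lies outside the range of that theorem. For the zero-count, the reindexing $j = n-k-1$ turns $\sum_{k=2}^{n-1} F_{n-k-1}$ into the partial Fibonacci sum $\sum_{j=0}^{n-3} F_{j}$, and the standard identity $\sum_{j=0}^{m} F_{j} = F_{m+2} - 1$ collapses this to $F_{n-1} - 1$; adding the boundary term $1$ yields $\no{\ell_{1}} = F_{n-1}$.

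For the one-count the same substitution gives $\yes{\ell_{1}} = (n-1) + \sum_{k=2}^{n-1}(k-1)F_{n-k-1}$, where $(n-1)$ is the contribution of the boundary term. After reindexing by $i = k-1$ the remaining sum becomes the index-weighted convolution $S(n-2)$, where $S(m) = \sum_{i=1}^{m} i\,F_{m-i}$. The identity I expect to need is $S(m) = F_{m+3} - (m+2)$; applied with $m = n-2$ it gives $F_{n+1} - n$, and combining with $(n-1)$ produces $\yes{\ell_{1}} = F_{n+1} - 1$.

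The substantive input is Theorem~\ref{fibcounts}; everything after it is bookkeeping with Fibonacci partial sums, and the only step needing real care is the weighted identity $S(m) = F_{m+3} - (m+2)$. Rather than attack it directly, I would first show that $S$ satisfies the inhomogeneous recurrence $S(m) = S(m-1) + S(m-2) + (m-1)$ --- splitting off the two top terms, which vanish or contribute $m-1$ because $F_{0} = 0$ and $F_{1} = 1$, and applying $F_{m-i} = F_{m-1-i} + F_{m-2-i}$ to the rest --- after which the closed form follows by routine induction. I would also spot-check $n = 2, 3$, where the range $2 \le k \le n-1$ is empty or a single point so that the count comes essentially from the boundary term $c(n,n) = 1$; the conventions $F_{0} = 0$, $F_{1} = 1$ make these agree.
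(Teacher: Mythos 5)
Your proposal is correct and follows essentially the same route as the paper: decompose $\ell_{1}$ into primitives of order $1$, write $\no{\ell_{1}} = \sum_{k} c(n,k)$ and $\yes{\ell_{1}} = \sum_{k}(k-1)c(n,k)$, substitute Theorem~\ref{fibcounts} together with the boundary term $c(n,n)=1$, and finish with the partial-sum identity $\sum F_{i} = F_{n+2}-1$ and the weighted identity $\sum i\,F_{n-i} = F_{n+3}-(n+2)$. The only differences are cosmetic bookkeeping in how the weighted sum is reindexed, plus your (welcome) sketch of how to verify the weighted identity, which the paper merely cites.
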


\begin{proof}
The result follows from Theorem~\ref{fibcounts}, and the identities
\[ \sum_{i = 1}^{n} F_{i} = F_{n+2} - 1 \]
and 
\[ \sum_{i = 1}^{n} i \cdot F_{n - i} = F_{n+3} - (n + 2).\]
Each primitive of order $1$ and length $k$ contributes a 
zero to $\no{\ell_{1}}$ and $k - 1$ ones to $\yes{\ell_{1}}$.
Hence, 
\begin{eqnarray*}
\no{\ell_{1}} & = & \sum_{k = 2}^{n} c(n, k) \\
	& = & 1 + \sum_{k = 2}^{n-1} F_{n-k-1} \\
	& = & F_{n-1} \mbox{.} 
\end{eqnarray*}
Likewise, we have that 
\begin{eqnarray*}
\yes{\ell_{1}} & = & \sum_{k = 2}^{n} (k - 1) \cdot c(n, k) \\
	& = & (n-1) - \sum_{k = 2}^{n-1} F_{n-k-1} + \sum_{k = 2}^{n-1} k \cdot F_{n-k-1} \\
	& = & (n-1) - (F_{n-1} - 1) - F_{n - 2} + \sum_{k = 1}^{n-1} k \cdot F_{n - 1 - k} \\ 
	& = & n - F_{n} + F_{n + 2} - (n + 1) \\
	& = & F_{n + 1} - 1. 
\end{eqnarray*}
\end{proof}

Because the Lucas and Fibonacci numbers are related as 
$L_{n} = F_{n-1} + F_{n+1}$ for $n \geq 1$,
we have the following result.
\begin{corollary}
For \fn\ with $n \geq 2$, 
$\sk(\ell_{1}) = - F_{n} + 1$ and $|\ell_{1}| = L_{n} - 1$.
\end{corollary}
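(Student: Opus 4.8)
The plan is to obtain both identities as immediate consequences of the preceding theorem, which supplies the zero-count $\no{\ell_1} = F_{n-1}$ and the one-count $\yes{\ell_1} = F_{n+1} - 1$, together with the definitional relations $\sk(w) = \no{w} - \yes{w}$ and $|w| = \no{w} + \yes{w}$. The entire argument reduces to two short arithmetic simplifications, one for each claimed quantity.

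First I would compute the skew by subtracting the two counts,
\[ \sk(\ell_1) = \no{\ell_1} - \yes{\ell_1} = F_{n-1} - (F_{n+1} - 1), \]
and then apply the Fibonacci recurrence $F_{n+1} = F_n + F_{n-1}$, which collapses $F_{n-1} - F_{n+1}$ to $-F_n$ and yields $\sk(\ell_1) = -F_n + 1$ as claimed. Second I would compute the length by adding the counts,
\[ |\ell_1| = \no{\ell_1} + \yes{\ell_1} = F_{n-1} + F_{n+1} - 1, \]
and recognize $F_{n-1} + F_{n+1}$ as $L_n$ through the identity $L_n = F_{n-1} + F_{n+1}$ noted just before the corollary, giving $|\ell_1| = L_n - 1$.

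There is no substantive obstacle here: the statement is a direct consequence of the counts already established, and the only work is the one-step folding of the Fibonacci recurrence and the substitution of the Lucas–Fibonacci identity. The single point worth a brief sanity check is the index range, since the count theorem holds for $n \geq 2$ while the Lucas–Fibonacci relation is stated for $n \geq 1$; both are therefore valid on the corollary's range $n \geq 2$, and confirming the base case $n = 2$ (where $\sk(\ell_1) = 0$ and $|\ell_1| = 2$) verifies that the index normalizations line up correctly.
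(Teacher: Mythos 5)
Your proposal is correct and matches the paper's own derivation: the corollary follows immediately from the preceding theorem's counts $\no{\ell_1} = F_{n-1}$ and $\yes{\ell_1} = F_{n+1} - 1$ via $\sk = \epsilon - \delta$, $|\cdot| = \epsilon + \delta$, the Fibonacci recurrence, and the identity $L_n = F_{n-1} + F_{n+1}$, which is exactly the one-line justification the paper gives. The base-case check at $n = 2$ is a nice extra but not needed.
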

Since $\mathcal{K}_{1} = \ell_{1}1$ for \fn\ with $n \geq 1$, 
we know that $\sk(\mathcal{K}_1) = -\gs{G}{2}{n-1}$ and 
$|\mathcal{K}_1| = \gs{H}{2}{n}$.
Hence, Theorem~\ref{main} holds for $m = 0, 1$.

\section{Generalizing to higher orders}

We generalize compositions of an integer $n$ into parts greater than $1$
to accommodate Lyndon word primitives $0^{i} 1^{j}$ of order $i \geq 1$.
We use the notation $x^{(y)} = (x^{(y-1)})'$ where $x^{(0)} = x$, so 
$x^{(1)} = x'$, $x^{(2)} = x''$, etc.
We say that 
\[ n = x_{1}^{(y_{1})} + x_{2}^{(y_{k})} + \ldots + x_{k}^{(y_{k})} \]
is an \textit{$m$-colored composition of $n$ into $k$ parts greater than $1$}
if 
\begin{itemize}
\item $\sum_{i = 1}^{k} x_{i} = n$ with $x_{i} \geq 2$,
\item and $0 \leq y_{i} \leq \min\{x_{i} - 2, m - 1\}$ for $1 \leq i \leq k$.
\end{itemize}
Note that $x_{i}^{(y_{i})} = x_{j}^{(y_{j})}$ if and only if 
$x_{i} = x_{j}$ and $y_{i} = y_{j}$.
For instance,
$5$, $5'$, $2 + 3$, $3 + 2$, $3' + 2$, and $2 + 3'$ are the six 
$2$-colored compositions of $5$.

For $m \geq 2$,
let $\gs{P}{m}{n}$ be the sequence defined by the $m$-th order recurrence 
$\gs{P}{m}{n} = \sum_{i = 1}^{m} \gs{P}{m}{n - i}$ with initial conditions
$\gs{P}{m}{0} = \ldots = \gs{P}{m}{m-3} = 0$, $\gs{P}{m}{m-2} = 1$, and 
$\gs{P}{m}{m-1} = 0$.

The sequence $\gs{P}{m}{n}$ is another generalization of the Fibonacci numbers.
For $m = 2$, we have that $\gs{P}{2}{n} = F_{n-1}$ when $n \geq 1$.
In general, an inductive argument from the definition yields the following 
identities.

\begin{lemma}
Let $m \geq 2$.
For $m \leq n \leq 2m - 2$, $\gs{P}{m}{n} = 2^{n-m}$.
Also, $\gs{P}{m}{2m-1} = 2^{m-1} - 1$.
\end{lemma}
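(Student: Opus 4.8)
The plan is to induct on $n$, exploiting the fact that in the defining recurrence $\gs{P}{m}{n} = \sum_{i=1}^{m} \gs{P}{m}{n-i}$ the summation window $\{n-m, n-m+1, \ldots, n-1\}$ straddles the boundary between the prescribed initial data (indices $0$ through $m-1$) and the values already computed by the recurrence (indices $\geq m$). The crucial observation is that the \emph{only} nonzero initial value is $\gs{P}{m}{m-2} = 1$; everything else among $\gs{P}{m}{0}, \ldots, \gs{P}{m}{m-1}$ vanishes. So the entire argument reduces to tracking whether the single index $m-2$ still lies in the moving window, and summing a geometric series over the portion of the window that has left the initial-data region.

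For the base case $n = m$ the window is $\{0, 1, \ldots, m-1\}$, consisting entirely of initial values, whose sum is just $\gs{P}{m}{m-2} = 1 = 2^{0} = 2^{m-m}$. For the inductive step with $m < n \leq 2m-2$, I would split the window at the boundary,
\[
\gs{P}{m}{n} = \sum_{k=n-m}^{m-1} \gs{P}{m}{k} + \sum_{k=m}^{n-1} \gs{P}{m}{k}.
\]
In the first sum the only surviving term is $\gs{P}{m}{m-2} = 1$, and this is exactly where the hypothesis $n \leq 2m-2$ enters: it guarantees $n - m \leq m-2$, so index $m-2$ is still captured by the window. In the second sum the inductive hypothesis replaces each term by $2^{k-m}$, giving the geometric sum $\sum_{j=0}^{\,n-1-m} 2^{j} = 2^{n-m} - 1$. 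Adding the two contributions yields $\gs{P}{m}{n} = 1 + (2^{n-m}-1) = 2^{n-m}$, as claimed.

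For the final value $n = 2m-1$ the window becomes $\{m-1, m, \ldots, 2m-2\}$. Now the nonzero initial term $\gs{P}{m}{m-2}$ has \emph{fallen out} of the window — only the zero initial value $\gs{P}{m}{m-1}$ survives from the initial-data region — so the initial-condition contribution drops from $1$ to $0$. What remains is the full geometric series $\sum_{k=m}^{2m-2} 2^{k-m} = \sum_{j=0}^{m-2} 2^{j} = 2^{m-1} - 1$, which is precisely the asserted value.

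The argument is almost entirely bookkeeping, and I expect the only genuine subtlety to be correctly locating the boundary index $m-2$ relative to the moving window: it is exactly the transition where this index exits the window (between $n = 2m-2$ and $n = 2m-1$) that accounts for the $-1$ correction distinguishing $2^{m-1}-1$ from the clean powers of two. Getting the inequalities on the window endpoints right, and confirming that $\gs{P}{m}{m-1}=0$ rather than $1$ is the value that persists into the last case, is where I would take care.
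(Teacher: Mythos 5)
Your proof is correct and is exactly the "inductive argument from the definition" that the paper invokes without writing out: the paper gives no details beyond that one sentence, and your window-splitting induction, including the observation that the sole nonzero initial value $\gs{P}{m}{m-2}=1$ exits the summation window precisely at $n=2m-1$, supplies those details accurately.
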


Let $d(m, n)$ be 
the number of $m$-colored compositions of $n$ into parts greater than $1$
for $m, n \geq 1$.

\begin{theorem}
\[ d(m, n) = \gs{P}{m + 1}{n + m - 1}.\]
\end{theorem}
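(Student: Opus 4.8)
The plan is to attack this with ordinary generating functions, in keeping with the rest of the paper, by encoding an $m$-colored composition into parts greater than $1$ as a sequence of independently colored parts. First I would find the generating function of a single colored part, weighting a part by $x^{s}$ when it has size $s$. A part of size $s \geq 2$ admits the colors $y$ with $0 \leq y \leq \min\{s-2,\,m-1\}$, so summing over all admissible pairs $(s,y)$ --- fixing the color $y \in \{0,\dots,m-1\}$ first and then letting $s$ range over $s \geq y+2$ --- gives
\[ A(x) = \sum_{y=0}^{m-1}\ \sum_{s \geq y+2} x^{s} = \sum_{y=0}^{m-1}\frac{x^{\,y+2}}{1-x} = \frac{x^{2}(1-x^{m})}{(1-x)^{2}}. \]

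Since an $m$-colored composition into parts greater than $1$ is simply a finite (possibly empty) sequence of such colored parts, the sequence construction yields its generating function as $D(x) = 1/(1-A(x))$. A short simplification gives
\[ D(x) = \frac{(1-x)^{2}}{(1-x)^{2}-x^{2}(1-x^{m})} = \frac{(1-x)^{2}}{1-2x+x^{m+2}}, \]
and $[x^{n}]D(x) = d(m,n)$ for $n \geq 1$. The next step is the key algebraic identity $1-2x+x^{m+2} = (1-x)\,\denom{m+1}$, which collapses this to $D(x) = (1-x)/\denom{m+1}$.

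In parallel, I would compute the generating function $P(x) = \sum_{n\geq 0}\gs{P}{m+1}{n}\,x^{n}$ directly from its $(m+1)$-st order recurrence. Multiplying by $\denom{m+1}$ and using that the recurrence holds for $n \geq m+1$, every coefficient beyond the initial block vanishes, and the nonstandard initial conditions (a single $1$ at index $m-1$ followed by a $0$ at index $m$) leave the numerator $x^{m-1}-x^{m} = x^{m-1}(1-x)$. Hence
\[ P(x) = \frac{x^{m-1}(1-x)}{\denom{m+1}} = x^{m-1}D(x). \]
Comparing coefficients then gives $\gs{P}{m+1}{n} = [x^{n}]P(x) = [x^{\,n-m+1}]D(x) = d(m,\,n-m+1)$, which is exactly the claimed $d(m,n) = \gs{P}{m+1}{n+m-1}$.

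The routine parts are the geometric-series sums and the final coefficient shift; the two places where care is needed are translating the color bound $0 \leq y_{i} \leq \min\{x_{i}-2,\,m-1\}$ into $A(x)$ without miscounting the small parts, and extracting the numerator of $P(x)$ from the irregular initial conditions --- the cancellation that produces the clean factor $(1-x)$ there is the most likely spot for an off-by-one error. Once both generating functions are in closed form, the identity $1-2x+x^{m+2}=(1-x)\,\denom{m+1}$ does the real work and the theorem follows immediately; this is the step I would double-check most carefully.
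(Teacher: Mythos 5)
Your proof is correct, but it takes a genuinely different route from the paper's. The paper argues combinatorially: it establishes the $(m+1)$-term recurrence $d(m,n) = \sum_{j=1}^{m+1} d(m,n-j)$ directly, by decrementing the last part when its color permits ($y_k < x_k - 2$) and deleting it otherwise, and then separately verifies the initial block $d(m,1)=0$, $d(m,n) = 2^{n-2}$ for $2 \leq n \leq m+1$, matching these against the values of $\gs{P}{m+1}{n}$ given in the preceding lemma. You instead use the sequence construction on the single-part generating function $A(x) = x^2(1-x^m)/(1-x)^2$ (your interchange of the sum over sizes and colors correctly handles the bound $y \leq \min\{x-2,m-1\}$), reduce $1/(1-A(x))$ via the factorization $1 - 2x + x^{m+2} = (1-x)\denom{m+1}$, and compare with the rational form of the generating function of $\gs{P}{m+1}{n}$ extracted from its recurrence and initial conditions; your numerator computation $x^{m-1}(1-x)$ is right and agrees with the closed form $p_{m+1}(x)$ that the paper only states afterward (as ``straightforward to see''). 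Your approach buys a cleaner verification --- no need to check the $2^{n-2}$ initial values combinatorially, and the auxiliary Lemma on $\gs{P}{m}{n} = 2^{n-m}$ becomes unnecessary --- and it proves the generating-function identity for $p_{m+1}$ as a byproduct rather than asserting it; what it gives up is the explicit bijective recurrence on colored compositions, which is the structural fact the paper reuses in spirit when analyzing $\Psi(m,n)$ later. The only point to make explicit is that $[x^n]\bigl(1/(1-A(x))\bigr) = d(m,n)$ only for $n \geq 1$ (the constant term $1$ counts the empty composition), which is harmless since the theorem's index $n+m-1 \geq m$ keeps you in the valid range.
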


\begin{proof}
The $m$-colored compositions of $n$ satisfy an $(m+1)$-th order recursion
as follows.
Let $$x = (x_{1}^{(y_{1})}, x_{2}^{(y_{2})}, \ldots, x_{k}^{(y_{k})})$$ for 
$\sum_{i = 1}^{k} x_{i} = n$ with integers $x_{i} \geq 2$ 
and $0 \leq y_{i} \leq \min\{x_{k} - 2, m - 1\}$.

Suppose $n \geq m + 2$.
If $y_{k} < x_{k} - 2$, then 
$$(x_{1}^{(y_{1})}, x_{2}^{(y_{2})}, \ldots, (x_{k} - 1)^{(y_{k})})$$
is an $m$-colored composition of $n - 1$.
Otherwise, $2 \leq x_{k} \leq m + 1$ and 
$$(x_{1}^{(y_{1})}, x_{2}^{(y_{2})}, \ldots, x_{k-1}^{(y_{k-1})})$$
is an $m$-colored composition of $n - x_{k}$.
For $n = m + 2$, the recurrence has only $m$ terms since there is no 
$m$-colored composition of $1$ with parts $\geq 2$.

For initial conditions,
we consider $m$-colored compositions of integers $n$ with $1 \leq n \leq m + 1$.
We have that $d(m, 1) = 0 = \gs{P}{m+1}{m}$ and 
$d(m, 2) = 1 = \gs{P}{m+1}{m + 1}$ for all $m$.
We claim that $d(m, n) = 2^{n-2}$ for $2 \leq n \leq m + 1$.

When $2 \leq n \leq m$,  
the only symbols that can occur in the $m$-colored composition of $n$
are $\{2, 3, 3', 4, 4', 4'', 5, \ldots, n^{(n-2)}\}$
and $d(m, n) = d(m - 1, n)$. 

Consider $d(m, m + 1)$. 
There is exactly one $m$-colored composition of $m + 1$ 
which is not an $(m-1)$-colored composition of $m + 1$, 
namely $x = ((m+1)^{(m-1)})$. 
Hence $d(m, m + 1) = 1 + d(m-1, m+1)$.
Inductively, then, 
\[ d(m, m + 1) = 1 + \sum_{i = 1}^{m} d(m - 1, i)  = 
1 + 0 + \sum_{i = 2}^{m} 2^{i-2} = 2^{m - 1}.\]
\end{proof}

It is again straightforward to see that the sequence $\gs{P}{m}{n}$ 
has the generating function 
\[ p_{m}(x) = \frac{x^{m-2}(1 - x)}{\denom{m}}\]
where
\[ \denom{m} = 1 - x - x^{2} -\ldots - x^{m} = 1 - x \sum_{i = 0}^{m-1} x^{i}\]
as in the generating functions for $\gs{G}{m}{n}$ and $\gs{H}{m}{n}$
from Section~\ref{intro}.

Let $Z = \{2, 3, 3', 4, 4', 4'', \ldots\}$ be the set of colored integers.
Let $\psi$ be a mapping from binary strings $0^{i} 1^{j}$ for $i, j \geq 1$
to $Z$ where $\psi(0^{i} 1^{j}) = (i + j)^{(i-1)}$.

Recall that $\mathcal{L}_{m}$ is the concatenation of 
$\ell_{m} \ell_{m-1} \ldots \ell_{1}$ from \fn, 
where $\ell_{i} = \varepsilon$ for $i > n - 1$.
Let $\Psi(m, n)$ be the $m$-colored multiset obtained by applying 
$\psi$ to the primitives of order $1 \leq i \leq m$ from $\mathcal{L}_{m}$. 
Let $c(m, n, k)$ be the number of integers $k^{(0)} = k \geq 2$ in $\Psi(m, n)$.

\begin{theorem}\label{cmnk}
For $m \geq 1, k \geq 2$,  
the count $c(m, n, k)$
is the coefficient of $x^{n}$ in 
\[ x^{k - m + 1} p_{m+1}(x) = \frac{x^k (1 - x)}{\denom{m+1}}.\]
\end{theorem}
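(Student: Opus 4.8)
The plan is to reduce Theorem~\ref{cmnk} to the single identity $c(m,n,k) = d(m,n-k)$ and then quote the theorem that $d(m,N) = \gs{P}{m+1}{N+m-1}$. First I would dispatch the displayed algebraic equality: since $p_{m+1}(x) = x^{m-1}(1-x)/\denom{m+1}$, multiplying by $x^{k-m+1}$ gives $x^{k}(1-x)/\denom{m+1}$ at once. Extracting $[x^n]$ from $x^{k-m+1}p_{m+1}(x)$ is the same as extracting $[x^{\,n-k+m-1}]$ from $p_{m+1}(x)$, namely $\gs{P}{m+1}{n-k+m-1}$; taking $N = n-k$ in $d(m,N) = \gs{P}{m+1}{N+m-1}$ identifies this with $d(m,n-k)$. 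So everything comes down to proving
\[ c(m,n,k) = d(m,n-k). \]

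For the combinatorial identity I would generalize the rotation argument of Theorem~\ref{fibcounts} to colored parts. By definition $c(m,n,k) = \sum_{w} r_w$, where $w$ ranges over the Lyndon words of $\mathcal{L}_m$ (length $d \mid n$, $d>1$, maximal $0$-run at most $m$) and $r_w$ counts the blocks equal to $01^{k-1}$ in the unique factorization of $w$ into primitives. To an $m$-colored composition $x = (x_1^{(y_1)}, \ldots, x_j^{(y_j)})$ of $n-k$ I attach the length-$n$ word
\[ \omega(x) = 01^{k-1}\,0^{y_1+1}1^{x_1-y_1-1}\cdots 0^{y_j+1}1^{x_j-y_j-1}, \]
whose blocks after the leading $01^{k-1}$ are exactly the $\psi$-preimages of the colored parts. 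The constraints $0 \le y_i \le \min\{x_i-2,\,m-1\}$ are precisely what make each such block a genuine primitive of order between $1$ and $m$ (they force a nonempty run of $1$'s and a $0$-run of length at most $m$), so $\omega(x)$ has maximal $0$-run at most $m$. Replacing $\omega(x)$ by the Lyndon word $\lambda(x)$ of its rotation class---its primitive root when $\omega(x)$ is periodic---produces a word of length dividing $n$ that lies in $\mathcal{L}_m$ and carries a distinguished $01^{k-1}$ block. This defines a map from $m$-colored compositions of $n-k$ to pairs (Lyndon word of $\mathcal{L}_m$, marked $01^{k-1}$ block), and $c(m,n,k) = d(m,n-k)$ is exactly the assertion that this map is a bijection.

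I would check bijectivity by the same two cases as Theorem~\ref{fibcounts}, the colors being inert since $\psi$ transports them faithfully. If $\omega(x)$ is aperiodic, then $\lambda(x)$ has length $n$, and re-reading $\lambda(x)$ cyclically starting immediately after each of its $r_{\lambda(x)}$ blocks $01^{k-1}$ yields exactly the $r_{\lambda(x)}$ distinct colored compositions mapping to $\lambda(x)$. If $\omega(x)$ is periodic with primitive root $w$ of length $p$, then shifting the mark by a full period of $w$ reproduces the same linear reading, so the $(n/p)\,r_w$ marked occurrences of $01^{k-1}$ in $w^{n/p}$ collapse to exactly $r_w$ distinct compositions. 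In both cases the number of compositions sent to a given $w$ equals $r_w$, and summing over $\mathcal{L}_m$ gives $\sum_w r_w = d(m,n-k)$.

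The main obstacle is this last periodicity bookkeeping: one must show that marks differing by a period of $w$ give identical compositions while the $r_w$ marks within a single period give distinct ones, so that the factor $n/p$ between occurrences in $w^{n/p}$ and occurrences in one period $w$ is exactly cancelled. Everything else---the generating-function manipulation, the coefficient extraction, and the verification that the colored-part inequalities are equivalent under $\psi$ to the primitive-order conditions---is routine.
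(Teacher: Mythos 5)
Your proposal is correct and takes essentially the same route as the paper, which proves this theorem by declaring it ``essentially the same as the proof of Theorem~\ref{fibcounts}'' with $m$-colored compositions of $n-k$ in place of ordinary ones; your reduction to $c(m,n,k)=d(m,n-k)$, the marked-block bijection, and the two-case (aperiodic/periodic) rotation analysis are exactly the intended argument, spelled out in more detail than the paper gives. The only point worth a footnote is the boundary value $N=n-k=0$, where you need the convention $d(m,0)=1$ (the empty composition) to match $\gs{P}{m+1}{m-1}=1$, since the paper's theorem on $d$ is stated only for $N\geq 1$.
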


\begin{proof}
The argument is essentially the same as the proof of Theorem~\ref{fibcounts},
except that we consider $m$-colored compositions $x$ of $n - k$ with parts 
greater than $1$ and give the result in terms of generating functions.
The rotational symmetries of $\omega(x)$, where the definition is extended
to higher order primitives, depend on which parts of $x$ are $k^{(0)} = k$. 
\end{proof}
 
By exchanging the colors of $k^{(0)}$ and some $k^{(i)}$ with 
$0 < i \leq \min\{k - 2, m - 1\}$ occurring in the 
$m$-colored compositions of $n - k$, 
we see that 
the number of occurrences of $k^{(i)}$ is also $c(m, n, k)$.

\begin{theorem}
Consider \fn\ and $\mathcal{L}_{m}$ with $n \geq 0$ and $m \geq 1$.
Then $\no{\mathcal{L}_{m}}$ is the coefficient of $x^{n}$ in 
\[ \frac{x^{2} \sum_{i = 0}^{m-1} (i+1) x^{i}}{\denom{m+1}}\]
and $\yes{\mathcal{L}_{m}}$ is the coefficient of $x^{n}$ in 
\[ \frac{x^{2} \sum_{i=0}^{m-1} x^{i}}{(1-x) \denom{m+1}} .\]
\end{theorem}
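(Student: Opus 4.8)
The plan is to count zeros and ones primitive-by-primitive, pass through the map $\psi$ to colored integers, and then assemble the answer as a sum of the generating functions supplied by Theorem~\ref{cmnk}. Recall that a primitive $0^{i}1^{j}$ of order $i$ contributes exactly $i$ zeros and $j$ ones to $\mathcal{L}_{m}$, and that $\psi(0^{i}1^{j}) = (i+j)^{(i-1)}$. Thus a colored integer $k^{(c)}$ in the multiset $\Psi(m,n)$ represents a primitive of order $c+1$ and length $k$, so each occurrence of $k^{(c)}$ accounts for $c+1$ zeros and $k-(c+1)$ ones. The admissible colors for a given $k$ are $0 \le c \le \min\{k-2,\,m-1\}$, matching the range allowed in an $m$-colored composition. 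The first key input is that, by Theorem~\ref{cmnk} together with the color-exchange observation immediately following it, the number of occurrences of $k^{(c)}$ in $\Psi(m,n)$ equals $c(m,n,k)$ for \emph{every} admissible color $c$, not merely $c=0$.

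Given this, I would write
\[
\no{\mathcal{L}_{m}} = \sum_{c=0}^{m-1} (c+1) \sum_{k \ge c+2} c(m,n,k)
\quad\text{and}\quad
\yes{\mathcal{L}_{m}} = \sum_{c=0}^{m-1} \sum_{k \ge c+2} \bigl(k-c-1\bigr)\, c(m,n,k),
\]
where the inner sums run over all lengths $k$ for which $c$ is an admissible color (equivalently $k \ge c+2$). Substituting the generating function of Theorem~\ref{cmnk}, namely $c(m,n,k) = [x^{n}]\, x^{k}(1-x)/\denom{m+1}$, and summing over $k$ first, I would use the geometric series $\sum_{k \ge c+2} x^{k} = x^{c+2}/(1-x)$. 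For the zero count the weight $c+1$ is constant in $k$, so this cancels the factor $(1-x)$ and leaves
\[
\no{\mathcal{L}_{m}} = [x^{n}] \sum_{c=0}^{m-1} (c+1)\, \frac{x^{c+2}}{\denom{m+1}}
= [x^{n}]\, \frac{x^{2} \sum_{i=0}^{m-1}(i+1)x^{i}}{\denom{m+1}}.
\]
For the one count the weight $k-c-1$ is linear in $k$; writing $j = k-c-1 \ge 1$ gives $\sum_{k \ge c+2}(k-c-1)x^{k} = \sum_{j \ge 1} j\, x^{\,j+c+1} = x^{c+2}/(1-x)^{2}$, so after cancelling one factor of $(1-x)$ an extra $1/(1-x)$ survives, yielding
\[
\yes{\mathcal{L}_{m}} = [x^{n}] \sum_{c=0}^{m-1} \frac{x^{c+2}}{(1-x)\,\denom{m+1}}
= [x^{n}]\, \frac{x^{2} \sum_{i=0}^{m-1} x^{i}}{(1-x)\,\denom{m+1}},
\]
which are exactly the two claimed expressions.

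The routine boundary cases (small $n$, where $\mathcal{L}_{m} = \varepsilon$ and both counts vanish) are handled automatically, since both generating functions have no terms below $x^{2}$. I expect the only genuine obstacle to be the first step: justifying that $k^{(c)}$ occurs $c(m,n,k)$ times for every admissible color $c$. This rests entirely on the color-exchange bijection sketched after Theorem~\ref{cmnk}, which in turn inherits the periodic/aperiodic case analysis of Theorem~\ref{fibcounts}; one must check that exchanging the color of a designated $k^{(0)}$ with that of a $k^{(c)}$ is a well-defined involution on the $m$-colored compositions of $n-k$ that respects the rotational-equivalence bookkeeping. Once that uniform count is in hand, the remainder is the formal-power-series manipulation above, where the cancellation of $(1-x)$ and the appearance of the surviving $1/(1-x)$ in the one-count are the only points requiring care.
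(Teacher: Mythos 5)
Your proposal is correct and follows essentially the same route as the paper: both arguments fix the order (your color $c$ is the paper's $i-1$), invoke the color-exchange observation to assert that $k^{(c)}$ occurs $c(m,n,k)$ times for every admissible color, and then convert the sum over $k$ into a division by $(1-x)$ (your geometric-series computation is the paper's ``partial sums of the sequence with generating function $(1-x)/\denom{m+1}$'') before summing over the orders with the appropriate weights. Your closing remark correctly locates the only nontrivial input --- the uniformity of the count across colors --- which the paper likewise leaves to the brief exchange argument following Theorem~\ref{cmnk}.
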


\begin{proof}
Let $1 \leq i \leq m$ be fixed.
Each primitive of order $i$ contributes $i$ zeros to $\no{\mathcal{L}_{m}}$,
and the number of colored integers $k^{(i-1)}$ in $\Psi(m,n)$ 
is the sum of $c(m, n, k)$ for $i + 1 \leq k \leq n$.
According to Theorem~\ref{cmnk}, this is the sum of the first 
$n - (i + 1) + 1$ terms of the sequence 
$a_{0}, a_{1}, a_{2}, \ldots$ whose generating function is 
\[ \frac{(1-x)}{\denom{m+1}},\] which 
is the coefficient of $x^{n - i - 1}$ in the series
\[ \frac{1}{\denom{m+1}}.\]
The result for $\no{\mathcal{L}_{m}}$ follows by a weighted summation over all 
$1 \leq i \leq m$, with the exponents adjusted appropriately.

Each primitive of order $i$ and length $k$ contributes $(k - i)$ ones to 
$\yes{\mathcal{L}_{m}}$.
To calculate the contribution for a given $i$,  
we again sum over the first $n - i$ terms of the sequence 
$a_{0}, a_{1}, a_{2}, \ldots$, 
except that now each term $a_{j}$ is weighted by $(n - i - j)$.
This is the coefficient of $x^{n - i - 1}$ in the series
\[ \frac{1}{(1 - x) \denom{m+1}}.\]
Summing over the possible $i$'s yields $\yes{\mathcal{L}_{m}}$.
\end{proof}

Recall that $\suff$ is the proper suffix of \fn\ consisting of the 
Lyndon words of length $d \mid n$ containing at most $m$ consecutive $0$'s;
\[ \suff = \mathcal{L}_{m} 1.\] 

\begin{theorem}
Consider \fn\ and \suff\ with $n \geq 1$ and $m \geq 0$.
Then $\sk(\suff)$ is the coefficient of $x^{n}$ in 
\[ \frac{- x + \sum_{i = 3}^{m+1} (i - 2)x^{i}}{\denom{m+1}}\]
and $|\suff|$ is the coefficient of $x^{n}$ in 
\[ \frac{\sum_{i = 1}^{m+1} i x^{i}}{\denom{m+1}}.\]
\end{theorem}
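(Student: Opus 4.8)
The plan is to reduce everything to the previous theorem by exploiting the fact that $\suff$ is obtained from $\mathcal{L}_{m}$ by appending a single trailing $1$ (namely $\ell_{0} = 1$). Thus $\no{\suff} = \no{\mathcal{L}_{m}}$ and $\yes{\suff} = \yes{\mathcal{L}_{m}} + 1$ for every $n \geq 1$, whence $\sk(\suff) = \sk(\mathcal{L}_{m}) - 1$ and $|\suff| = |\mathcal{L}_{m}| + 1$. Passing to generating functions, the trailing $1$ contributes one symbol for each $n \geq 1$, which is encoded by $\frac{x}{1-x}$: I subtract it in the skew computation and add it in the length computation. The degenerate case $m = 0$, where $\mathcal{L}_{0} = \varepsilon$ and $\suff = 1$, I dispose of directly by checking that $\frac{-x}{\denom{1}}$ and $\frac{x}{\denom{1}}$ have every coefficient equal to $-1$ and $1$ respectively; for $m \geq 1$ I invoke the previous theorem.

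First I record the two generating functions from the previous theorem, abbreviating $S_{0} = \sum_{i=0}^{m-1} x^{i}$ and $S_{1} = \sum_{i=0}^{m-1}(i+1)x^{i}$, so that $\no{\mathcal{L}_{m}}$ and $\yes{\mathcal{L}_{m}}$ are the coefficients of $x^{n}$ in $\frac{x^{2}S_{1}}{\denom{m+1}}$ and $\frac{x^{2}S_{0}}{(1-x)\denom{m+1}}$. The single identity that drives both computations is
\[ (1-x)S_{1} = S_{0} - m x^{m}, \]
obtained by telescoping $(1-x)\sum_{i=0}^{m-1}(i+1)x^{i}$. Equivalently $S_{1} - \frac{S_{0}}{1-x} = \frac{-m x^{m}}{1-x}$ and $S_{1} + \frac{S_{0}}{1-x} = \frac{2S_{0} - m x^{m}}{1-x}$. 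Using these, the skew generating function collapses to $\frac{-m x^{m+2}}{(1-x)\denom{m+1}} - \frac{x}{1-x}$ and the length generating function to $\frac{x^{2}(2S_{0} - m x^{m})}{(1-x)\denom{m+1}} + \frac{x}{1-x}$.

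The crux is then to clear the spurious factor of $1-x$ left in the denominator. Placing each expression over the common denominator $(1-x)\denom{m+1}$, I must show the numerators are divisible by $1-x$ and that the quotients equal the claimed numerators. For the skew this is the polynomial identity
\[ -m x^{m+2} - x\,\denom{m+1} = (1-x)\Bigl(-x + \sum_{i=3}^{m+1}(i-2)x^{i}\Bigr), \]
and for the length it is
\[ x^{2}(2S_{0} - m x^{m}) + x\,\denom{m+1} = (1-x)\sum_{i=1}^{m+1} i x^{i}. \]
Both are verified by expanding $\denom{m+1} = 1 - \sum_{i=1}^{m+1} x^{i}$ and comparing coefficients of each power of $x$: the interior terms telescope exactly as in $(1-x)S_{1}$, leaving constant coefficients on $x^{2},\ldots,x^{m+1}$, while the boundary term at $x^{m+2}$ is $1-m$ in the skew identity and $-(m+1)$ in the length identity.

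I expect the only genuine obstacle to be bookkeeping: keeping the summation ranges aligned when multiplying through by $(1-x)$ and when reconciling the shifted sums $\sum_{i=3}^{m+1}$ and $\sum_{i=1}^{m+1}$ against the expansion of $\denom{m+1}$. A convenient sanity check at each stage is to specialize to $m = 1$, where $\suff = \ell_{1}1 = \mathcal{K}_{1}$ and the formulas must collapse to $\frac{-x}{\denom{2}}$ and $\frac{x + 2x^{2}}{\denom{2}}$, recovering the $m = 1$ case already established.
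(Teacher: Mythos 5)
Your proposal is correct and follows essentially the same route as the paper: both reduce to the previous theorem via $\no{\suff} = \no{\mathcal{L}_{m}}$ and $\yes{\suff} = \yes{\mathcal{L}_{m}} + 1$, encode the trailing $1$ by $\frac{x}{1-x}$, and then simplify the resulting generating functions by clearing the spurious $(1-x)$ factor. Your algebra (organized around $(1-x)S_{1} = S_{0} - mx^{m}$) checks out and merely makes explicit the simplification the paper leaves implicit.
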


\begin{proof}
We have $\sk(\suff) = \no{\suff} - \yes{\suff}$ where 
$\no{\suff} = \no{\mathcal{L}_{m}}$ 
and $\yes{\suff} = \yes{\mathcal{L}_{m}} + 1$.
Adding $\frac{x}{1-x}$ to the generating function for $\yes{\mathcal{L}_{m}}$
yields 
\[ \frac{x (1 - x^{m+1})}{(1-x) \denom{m+1}}.\]
Taking the difference with $\no{\suff}$ gives 
\[ \frac{-x + \sum_{i = 2}^{m+1} x^{i} - (m-1)x^{m+2}}{(1-x)\denom{m+1}}\]
which simplifies to the desired result.
Similarly, adding the two series yields
\[ \frac{x + \sum_{i = 2}^{m+1} x^{i} - (m+1)x^{m+2}}{(1-x)\denom{m+1}}\]
which again simplifies.
\end{proof}

Offsetting the sequence $\gs{G}{m}{n}$ by an initial zero, 
and recalculating the generating function with the initial $\gs{H}{m}{0} = m$ 
replaced by a zero, we have the result stated in Theorem~\ref{main}.

\bibliographystyle{abbrv}
\bibliography{citepoints}

\end{document}